\newtheorem{thm}{Theorem}[section]
\newtheorem{lemma}[thm]{Lemma}
\newtheorem{conj}[thm]{Conjecture}
\theoremstyle{definition}
\theoremstyle{definition}
\theoremstyle{definition}
\newtheorem{question}[thm]{Question}
\theoremstyle{definition}
\theoremstyle{definition}
\theoremstyle{definition}
\theoremstyle{definition}
\theoremstyle{definition}
\theoremstyle{remark}
\begin{document}

\title{Trees with non log-concave independent set sequences} 

\author{David Galvin\thanks{Department of Mathematics, University of Notre Dame, Notre Dame IN, United States; dgalvin1@nd.edu. Research supported in part by the Simons Foundation Gift number 854277.
}}

\maketitle

\begin{abstract}
We construct a family of trees with independence numbers going to infinity for which the log-concavity relation for the independent set sequence of a tree $T$ in the family fails at around $\alpha(T)\left(1-1/(16\log \alpha(T))\right)$. Here $\alpha(T)$ is the independence number of $T$. This resolves a conjecture of Kadrawi and Levit.

\vspace{2mm}

\noindent {\bf Keywords}: Independent set, unimodality, log-concavity, tree.
\end{abstract}

\section{Introduction}

For a graph $G$ denote by $\alpha(G)$ the size of the largest independent set in $G$ (here an independent set is a set of vertices no two of which are adjacent), and for $t \geq 0$ denote by $i_t(G)$ the number of independent sets in $G$ of cardinality $t$. The sequence $(i_t(G))_{t=0}^{\alpha(G)}$ is the {\it independent set sequence} of $G$.

A sequence $(a_k)_{k=0}^n$ is {\it unimodal} if there is an index $k$ (the {\it mode} of the sequence) with 
$$
a_0 \leq a_1 \leq \cdots \leq a_k \geq a_{k+1} \geq \cdots \geq a_n;
$$
it is {\it log-concave} if $a_k^2 \geq a_{k-1}a_{k+1}$ for all $2 \leq k \leq n-1$, and it is {\it real-rooted} if the polynomial equation $\sum_{k=0}^n a_kx^k =0$ has only real zeroes. Unimodality, log-concavity and real-rootedness are ubiquitous in combinatorics and algebra; see for example the surveys \cite{B2015, B1994, S1989}. It is well known that for a positive sequence $(a_k)_{k=0}^n$ real-rootedness implies log-concavity which in turn implies unimodality, although none of these implications can be reversed in general (see e.g. \cite{B2016}). Unlike its close relative the matching sequence (which is always real-rooted \cite{HL1972}) the independent set sequence of a graph need not even be unimodal; this fact was first observed by Alavi, Malde, Schwenk and Erd\H{o}s \cite{AMSE1987} who also obtained the remarkable result that the independent set sequence of a graph can exhibit arbitrary patterns of rises and falls. 

That observation naturally led them to wonder whether there are families of graphs that have unimodal independent set sequences. One such family is the collection of claw-free graphs (graphs without an induced $K_{1,3}$; these include line graphs). Unimodality of the independent set sequence of claw-free graphs was first established by Hamidoune \cite{H1990} and the stronger real-rootedness property was established by Chudnovsky and Seymour \cite{CS2007}.   

Alavi, Malde, Schwenk and Erd\H{o}s proposed studying two other very natural candidate families --- trees and forests. They asked \cite[Problem 3]{AMSE1987}:
\begin{question} \label{question-AMSE}
For trees (or perhaps forests), is the vertex independence sequence unimodal?
\end{question}
This question, which is often presented in the literature as a conjecture, has received some attention, with numerous partial results (see for example \cite{BES2018, BG2021, BRGGGS2019, B2018, GH2018, H2020, LM2002, WZ2011, Z2007} and references therein) but few very general results. For example while on the one hand it is easy to check that paths and stars (the trees with extreme diameters) have unimodal (even log-concave) independent set sequences --- this provides a possible motivation for Question \ref{question-AMSE} --- on the other hand the status of caterpillars is not known. Here a {\it caterpillar} is a path with some pendant edges dropped from its vertices (not necessarily the same number of pendant edges at each vertex). Caterpillars are a simple common generalization of paths and stars.   

A tempting strengthening of Question \ref{question-AMSE} is to ask, is the independent set sequence of trees always log-concave? Note that log-concavity for trees immediately implies log-concavity for forests, since convolutions of log-concave sequences are log-concave (see e.g. \cite{B2016}). This is not true for unimodality. 

One reason that the strengthening to log-concavity is tempting is because it may be easier to prove. There are more approaches to log-concavity than there are to unimodality, in part because of the uniformity of the log-concavity relations (note that establishing unimodality often requires identifying the location of the mode, usually a non-trivial task). It is also tempting because computations run by Radcliffe \cite{R} showed that all trees on 25 or fewer vertices have log-concave independent set sequences.  However, when Kadrawi, Levit, Yosef, and Mizrachi extended these computations \cite{KLYM2023} they found two trees on 26 vertices that do not have log-concave independent set sequences. Further work by Kadrawi and Levit \cite{KL2023} exhibited infinite families of trees that do not have log-concave independent set sequences. In \cite{RS2025} Ramos and Sun used the AI architecture {\it PatternBoost}, developed by Charton, Ellenberg, Wagner, and Williamson \cite{CEWW2024} to find a plethora of essentially unstructured counterexamples to log-concavity on between 27 and 101 vertices. 

Kadrawi and Levit raised the issue of where along the independent set sequence of a tree log-concavity can break down. Say that log-concavity of a positive sequence $(a_i)_{i=0}^n$ is {\it broken at $k$} if $a_k^2 < a_{k-1}a_{k+1}$. A feature of the two examples found by Kadrawi, Levit, Yosef, and Mizrachi, and of the infinite families of examples constructed by Kadrawi and Levit, is that log-concavity of the independent set sequence is broken only at the last possible place, namely at $\alpha(T)-1$ (where $T$ is the tree under consideration). But Kadrawi and Levit also gave an ad hoc example of a tree $T$ whose independent set sequence is not log-concave, and for which log-concavity is broken slightly earlier, at $\alpha(T)-2$. They made the following natural conjecture:
\begin{conj} \label{conj-KL}
For every $\ell \geq 1$ there is a tree $T=T(\ell)$ for which log-concavity of the independent set sequence is broken at $\alpha(T)-\ell$.  
\end{conj}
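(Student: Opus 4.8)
The plan is to start from a small ``seed'' tree whose independent set sequence is already not log-concave, and then amplify it by a product-type tree operation that drives the location of the failure steadily down into the interior of the sequence. For the seed, fix a tree $B$ --- for concreteness one of the two $26$-vertex trees of \cite{KLYM2023}, or a suitable member of the Kadrawi--Levit family \cite{KL2023} --- whose independent set sequence $(a_t)_{t=0}^{m_0}$, $m_0=\alpha(B)$, satisfies $a_{m_0-1}^2<a_{m_0-2}a_{m_0}$, and fix a vertex $v\in B$ lying in every maximum independent set of $B$ (if none of the known seeds admits such a $v$ one can prepend a pendant edge, or tune the seed by a short search). Then $\deg I(B-v)=\deg I(B-N[v])=m_0-1$, and the bijection $A\leftrightarrow A\setminus\{v\}$ between maximum independent sets of $B$ and of $B-N[v]$ gives $[x^{m_0-1}]I(B-N[v])=[x^{m_0}]I(B)$; these are the local features near the top of the sequence that I will carry along.

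The amplification is the operation $\Phi_k$ ($k\ge 2$ an integer): given a tree $T$ with a marked vertex $v$, take $k$ vertex-disjoint copies of $(T,v)$, add a new vertex $\rho$ joined to the copy of $v$ in each copy, and mark one of those copies of $v$ in the resulting tree. Splitting independent sets of $\Phi_k(T)$ according to whether they contain $\rho$ gives at once
\[
I(\Phi_k(T))\;=\;I(T)^k+x\,I(T-v)^k .
\]
Since $v$ lies in every maximum independent set of $T$ we have $\deg I(T-v)=\alpha(T)-1$, hence $\alpha(\Phi_k(T))=k\alpha(T)$ and the correction $x\,I(T-v)^k$ has strictly smaller degree than the self-convolution $I(T)^k$; one also checks directly that the two local properties above are inherited by $(\Phi_k(T),\text{new mark})$, so the construction iterates cleanly. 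Comparing $I(T)^k$ and $I(T-v)^k$ --- their coefficient sums being $I(T;1)^k>I(T-v;1)^k$ and their peaks sitting at degrees $\approx k\mu$ and $\approx k\mu'$ with $\mu'\le\mu<\alpha(T)$ --- one checks that the correction term does not interfere with the log-concavity defect of $I(T)^k$ near the top, so it suffices to analyse $I(T)^k$ itself.

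The heart of the matter is therefore: if a polynomial $P$ with positive, internal-zero-free coefficients fails log-concavity at depth $g$ below its degree, then $P^k$ fails it at a predictable depth $g'=g'(g,k)$ below $k\deg P$, by a still-positive margin, and one must quantify how $g'$ and the margin evolve. Writing $P(x)=\sum_t p_t x^t$, the coefficient $[x^{k\deg P-j}]P^k$ is a weighted count of the ways to distribute ``deficiency'' $j$ among the $k$ factors, so a saddle-point expansion (equivalently, an Edgeworth expansion of the convolution $P^{*k}$ at a moderate deviation) controls these coefficients; the original $\Theta(1)$-size defect of $P$ survives in $P^k$ precisely where it outruns the $O(1/\sqrt k)$ smoothing inherent in convolution, and it does so at a depth growing with $k$. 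I expect the cleanest route is to carry a quantitative invariant along the iteration --- not merely ``broken at depth $d$'' but ``broken at depth $d$ with multiplicative margin $\ge\gamma$'' --- and to prove that $\Phi_k$ sends $(d,\gamma)$ to $(d',\gamma')$ with $d'$ substantially larger and $\gamma'>0$ with a controlled loss. (A single-step variant is to attach a pendant path $P_L$ to $B$ at $v$, for which $I=I(P_{L-1})I(B)+x\,I(P_{L-2})I(B-N[v])$; here the spread-out, log-concave factor $I(P_{L-1})$ plays the role of the convolution power, and a suitable choice of $L$ should place the break at the desired depth.)

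Finally, iterate: set $T^{(0)}=B$ and $T^{(r+1)}=\Phi_k(T^{(r)})$ for a fixed small $k$, so $\alpha(T^{(r)})=k^r\alpha(B)$ and reaching independence number $N$ forces $r\asymp\log N$ rounds; tracking $(d,\gamma)$ through these rounds places the break at depth $\Theta(\alpha/\log\alpha)$, which exceeds any prescribed $\ell$ once $\alpha$ is large, giving Conjecture~\ref{conj-KL}. For the exact placement at $\alpha(T)-\ell$ rather than merely at depth $\ge\ell$, one can finish with a few pendant-edge attachments at vertices in no maximum independent set to walk the break position down to $\ell$ one step at a time, and dispatch the finitely many small $\ell$ by the known examples (including Kadrawi and Levit's tree with break at $\alpha(T)-2$). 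The main obstacle is the middle step: convolution of internal-zero-free log-concave sequences is again log-concave --- products of real-rooted polynomials even satisfy Newton's inequalities --- so the whole construction is a fight against this smoothing, and quantifying how much of the defect survives each amplification, and at what depth, is exactly where the logarithm (and the constant $16$) in the quantitative statement come from.
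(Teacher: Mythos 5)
Your proposal is a research program rather than a proof, and its central step is both unproven and, in the form stated, false near the top of the sequence. The claim you call ``the heart of the matter'' --- that if $P$ breaks log-concavity at depth $g$ below $\deg P$ then $P^k$ breaks it at a predictable depth with a surviving margin --- already fails for $g=1$: writing $P=\sum_j p_jx^j$ with $\deg P=n$, the log-concavity inequality for $P^k$ at position $kn-1$ reduces to $\frac{k+1}{2}\,p_{n-1}^2\ge p_np_{n-2}$, which holds for all sufficiently large $k$ even when $p_{n-1}^2<p_np_{n-2}$. So convolution powers smooth the seed's defect out at the top, and nothing in your sketch shows it re-emerges at a controllable deeper location; Odlyzko--Richmond-type results on high convolutions point in the opposite direction. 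The invariant $(d,\gamma)$ you propose to carry through the iteration is never defined precisely and not a single step of its evolution is verified, and the closing ``walk the break down by pendant edges'' step is likewise unexamined (a pendant edge at $u$ replaces $I(T)$ by $I(T)+xI(T-u)$, and there is no argument that this shifts the break by exactly one).

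It is worth noting that your operation $\Phi_k$ actually contains the paper's construction as a special case, but used with the opposite mechanism. The paper's tree is $T_{m,t,1}=\Phi_m(S_{t,2},w)$, where $S_{t,2}$ is $t$ paths of length $2$ sharing an endpoint $w$, so that $I(T_{m,t,1})=I(S_{t,2})^m+x\,I(S_{t,2}-w)^m$. The seed $S_{t,2}$ has a \emph{log-concave} independent set sequence (Lemma \ref{lemma-mod-lc}), hence so does $I(S_{t,2})^m$; the break at $mt+2$ is created entirely by the correction term $x\,I(S_{t,2}-w)^m=x(1+2x)^{mt}$, whose top coefficient $2^{mt}$ inflates the count at $mt+1$ while contributing nothing at $mt+2$ and beyond. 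This is precisely the term your proposal argues ``does not interfere'' and discards. With that term retained, the paper needs only elementary counting: $f_{m,t}(mt+3)\ge\binom{m}{3}2^{mt-3t}$, $f_{m,t}(mt+2)=(1+o(1))\binom{m}{2}2^{mt-2t}$, and $\left(\binom{m}{2}2^{mt-2t}\right)^2<2^{mt}\binom{m}{3}2^{mt-3t}$ for $t\le m\le 2^{t/16}$, giving a break at depth $m-2$ below $\alpha=m(t+1)$. No iteration, no non-log-concave seed, and no convolution asymptotics are required.
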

In other words, log-cancavity can be broken arbitrarily far from the end of the independent set sequence. 

Here we confirm this conjecture, and show that log-concavity can be broken at a distance around $\alpha(T)/(16\log(\alpha(T)))$ from $\alpha(T)$ (the constant $16$ here is certainly not optimal).
\begin{thm} \label{thm-Theta(root(alpha))}
For every sufficiently large $t$ there is a tree $T_t$ with $\alpha(T_t) = (1+t)[2^{t/16}]$, for which log-concavity of the independent set sequence is broken at $t[2^{t/16}]+2$.
\end{thm}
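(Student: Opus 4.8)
The plan is to build $T_t$ by attaching many pendant structures to a long path (a caterpillar-like construction), arranging so that the independent set polynomial factors, and then to push the "defect" in log-concavity as far from the top as possible by taking a product of a suitably-tuned small polynomial with a very rich one. Concretely, I would look for a tree whose independence polynomial $I(T_t,x)=\sum_t i_t(T_t)x^t$ factors as $I(T_t,x)=p(x)\,q(x)^m$ for some fixed small "gadget" tree contributing $p$ (or $pq$) and $m$ copies of a star-like gadget contributing $q$. If $q(x)=(1+x)^k$ or something close, then $q(x)^m$ is log-concave (even real-rooted), so any failure of log-concavity of the product must come from the interaction with $p$; and crucially, multiplying by $(1+x)^{mk}$ shifts a fixed local defect of $p$ upward by roughly $mk$, i.e. to near the top of the sequence. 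To get a defect that is at distance $\Theta(\alpha/\log\alpha)$ from $\alpha$ rather than $O(1)$, the small factor $p$ itself must already have its log-concavity broken at a location that is a constant \emph{fraction} of its own degree; the degree of $p$ (equivalently, the independence number of the gadget) has to be $\Theta(\log\alpha(T_t))$, which forces the gadget to be exponentially large in its independence number — this is exactly where the $2^{t/16}$ comes from.

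First I would isolate the "breaking gadget": find a tree $S$ of independence number $t$ (or closely related to $t$) whose independent set sequence has log-concavity broken at $2$ (i.e. $i_2(S)^2 < i_1(S)i_3(S)$), and for which, moreover, this failure is \emph{robust} — it survives convolution with long log-concave tails. A natural candidate is a spider / star of stars: a center with many legs, each leg a short path or an edge, tuned so that $i_1$ is huge, $i_3$ is huge, but $i_2$ is comparatively small. I would write down $i_0,i_1,i_2,i_3$ for this family explicitly as polynomials in the number of legs and their lengths, and solve the inequality $i_2^2<i_1i_3$ while keeping the tree a tree (acyclic, connected). Then, second, I would take the disjoint union of $S$ with many copies of a star $K_{1,r}$ (or attach them via a path to keep it a single tree — attaching a pendant path of even length preserves the relevant convolution structure since $I(P_{2j},x)$ is real-rooted, or one can use the standard fact that $I(G\cup H,x)=I(G,x)I(H,x)$ for forests and then connect up cheaply). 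Each $K_{1,r}$ contributes a factor $(1+rx+x\cdot\text{(lower)})$; to keep things clean I would rather attach $m$ pendant edges' worth of $(1+x)$-type factors, so the overall polynomial is $I(S,x)\cdot(1+x)^{m}$ up to an explicit perturbation. Third, I would compute where log-concavity of $I(S,x)(1+x)^m$ breaks: since $(1+x)^m$ is log-concave and $I(S,x)$ has a single early defect at index $2$, a short computation (using $(fg)$-log-concavity lemmas, e.g. that the defect index of a product is at least that of each factor, with a controlled shift) shows the product's defect sits near $2+m$ from the \emph{bottom}, i.e. at $\alpha(T_t)-t+\text{const}$ from the bottom, hence at distance $\approx t$ from the top — matching the claimed break point $t[2^{t/16}]+2$ once the arithmetic $\alpha(T_t)=(1+t)[2^{t/16}]$ is set up, with $m\approx t[2^{t/16}]$ and the gadget contributing the extra additive $t$ and the $+2$.

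The main obstacle — and the step I would spend the most care on — is the robustness of the gadget defect under convolution with a long tail: it is easy to make $i_2(S)^2<i_1(S)i_3(S)$ for a small $S$, but after multiplying by $(1+x)^m$ the coefficients at the relevant index become sums $\sum_j \binom{m}{k-j} i_j(S)$ of many terms, and the sign of the discriminant $a_k^2-a_{k-1}a_{k+1}$ can flip. So I must choose the gadget so that the \emph{strict} inequality $i_2^2 < i_1 i_3$ holds with a multiplicative slack large enough to dominate the binomial mixing — this is what forces the gadget size (and hence $\alpha(S)\asymp\log\alpha(T_t)$ via $|S|\asymp 2^{\Theta(\alpha(S))}$), and it is exactly the quantitative heart of the $1/(16\log\alpha(T))$ bound; getting a clean, verifiable inequality here (rather than an asymptotic one) is the delicate part. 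A secondary, purely bookkeeping obstacle is realizing the disjoint-union polynomial identity inside a genuine \emph{tree}: I would handle this by the standard trick of joining the components along a path whose internal independence polynomial is real-rooted, so that connecting does not destroy log-concavity of the "nice" factor, and then absorbing the $O(1)$ distortion into the already-present multiplicative slack of the gadget.
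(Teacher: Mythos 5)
Your proposal rests on a ``shift lemma'' that does not exist: you assert that multiplying a gadget polynomial $p(x)=\sum_j i_j(S)x^j$ (with $i_2^2<i_1i_3$) by $(1+x)^m$ transports the defect from index $2$ to index $m+2$, ``with a controlled shift.'' A direct computation shows the opposite. Writing $a_k=\sum_j\binom{m}{k-j}i_j(S)$, the coefficient $a_{m+2}=\sum_j\binom{m}{j-2}i_j(S)$ is, for large $m$, dominated by the \emph{top} coefficients of $p$, not the bottom ones, so the bottom-end defect is simply washed out. For instance if $\deg p=3$ one gets
\begin{equation*}
a_{m+2}^2-a_{m+1}a_{m+3}=\left(i_2^2-i_1i_3\right)+m\,i_2i_3+\left(m^2-\tbinom{m}{2}\right)i_3^2>0
\end{equation*}
for all large $m$, \emph{whatever} gadget you choose: convolution with a long log-concave sequence smooths defects everywhere except within $O(1)$ of the extreme ends, and a defect that persists at the bottom of $p(x)(1+x)^m$ stays at the bottom --- it does not migrate to index $m+2$. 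This is exactly the failure mode you flag as ``the delicate part,'' but it is not a matter of building in enough multiplicative slack; the mechanism itself is wrong. The bookkeeping is also internally inconsistent: a break at index $m+2$ in a tree with $\alpha=m+\alpha(S)$ lies at distance $\alpha(S)-2$ from the top, so to reach distance $\Theta(\alpha/\log\alpha)$ you would need $\alpha(S)=\Theta(2^{t/16})$, whereas you stipulate $\alpha(S)=\Theta(\log\alpha)=\Theta(t)$ and claim the break sits ``at distance $\approx t$ from the top''; neither matches the required $\alpha-(t[2^{t/16}]+2)=[2^{t/16}]-2$.

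The paper's mechanism is additive, not multiplicative. The tree $T_{m,t,1}$ is a depth-$3$ spherically symmetric tree (root $v$ with $m$ children, each with $t$ grandchildren, each grandchild with one leaf child), and one splits $i_k(T)=f(k)+h(k)$ according to whether the independent set contains $v$. Since $T-N[v]$ is a perfect matching on $mt$ edges, $h(k)=0$ for $k\geq mt+2$ while $h(mt+1)=2^{mt}$ is an enormous one-point spike. The sequence $f$ (sets avoiding the root) \emph{is} log-concave --- it is the convolution of $m$ log-concave spider polynomials, i.e.\ precisely the kind of product you were hoping would carry a defect, and it carries none --- and the break at $mt+2$ is forced entirely by the spike: $f(mt+2)\approx\binom{m}{2}2^{mt-2t}$ and $f(mt+3)\geq\binom{m}{3}2^{mt-3t}$ (governed by how many of the root's children the set uses), so $f(mt+2)^2<2^{mt}f(mt+3)$ once $m\ll 2^t$, with the hypothesis $m\leq 2^{t/16}$ needed only to control the error terms in the asymptotics for $f(mt+2)$. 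If you want to salvage your plan, the lesson is that you should engineer a vertex whose deletion of its closed neighborhood leaves a graph with many maximum independent sets concentrated at a single size just below where the rest of the sequence lives, rather than trying to transport a low-order defect through a convolution.
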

Observe that
$$
\frac{(1+t)[2^{t/16}]}{16\log\left((1+t)[2^{t/16}]\right)} = (1+o(1))[2^{t/16}]
$$
justifying the assertion made immediate before the statement of Theorem \ref{thm-Theta(root(alpha))}.  
(Here and throughout all asymptotic ($o(\cdot)$) statements are to be interpreted as $t \rightarrow \infty$.)

Our construction of $T_t$ is inspired by the constructions of Kadrawi and Levit. 
In Section \ref{sec-construction_and_proof} we give the construction and proof, and we briefly discuss the result in Section \ref{sec-discussion}.
\section{Construction and proof} \label{sec-construction_and_proof}

We will start with a more general construction of a family of graphs $T_{m,t,1}$ for integers $m$ and $t$. The tree $T_{m,t,1}$ is rooted. The root (call it $v$) has $m$ children, $w_1, \ldots, w_m$. Each $w_i$ has $t$ children, $x_i^1, \ldots, x_i^t$, and each $x_i^j$ has one child, $y_i^j$. It is easy to check that $T_{m,t,1}$ has $1+m+2mt$ vertices and that it satisfies $\alpha(T_{m,t,1}) = (1+t)m$ (the unique largest independent set is $\{w_1, \ldots, w_m\} \cup \{y_i^j:1 \leq i, j \leq t\}$). See Figure \ref{fig:T43} for an example of this construction. Note that $T_{m,t,1}$ is an instance of a \emph{spherically symmetric} tree --- a rooted tree in which the number of children a vertex has depends only on its distance from the root. 

\begin{figure}[htp]
    \centering
    \includegraphics[width=9cm]{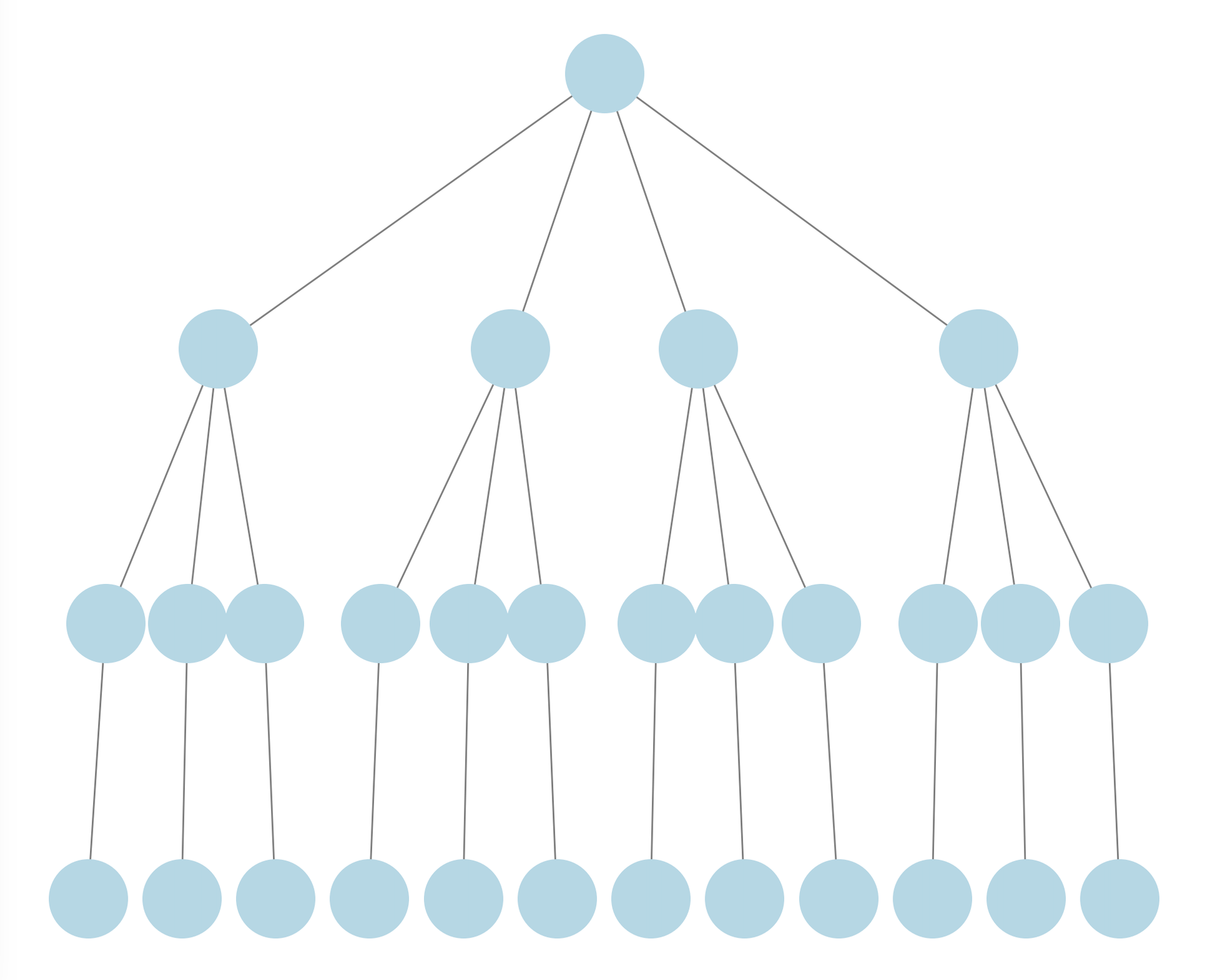}
    \caption{The tree $T_{4,3,1}$. The root is the topmost vertex.}
\label{fig:T43}
\end{figure}

In this section we prove the following theorem.  
\begin{thm} \label{thm-more-general}
For $t \leq m = m(t) \leq 2^{t/16}$ we have that for large enough $t$ the log-concavity of the independent set sequence of $T_{m,t,1}$ is broken at $mt+2$; that is,
$$
\left(i_{T_{m,t,1}}(mt+2)\right)^2 < i_{T_{m,t,1}}(mt+1)i_{T_{m,t,1}}(mt+3). 
$$
\end{thm}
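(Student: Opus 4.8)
The plan is to compute the independent set sequence of $T_{m,t,1}$ explicitly enough near the top to verify the broken log-concavity inequality. The key structural fact is that $T_{m,t,1}$ is built from $m$ identical branches hanging off the root $v$, where each branch $B_i$ consists of $w_i$ with $t$ pendant paths of length $2$ (the paths $w_i - x_i^j - y_i^j$). So the first step is to set up a generating-function / transfer-matrix computation for a single branch, distinguishing whether or not the root $v$ is allowed into the independent set. Write $I(B;x) = \sum_t i_t(B) x^t$ for a rooted graph $B$, and split it as $I^{\mathrm{in}}(B;x)$ (independent sets containing the root) plus $I^{\mathrm{out}}(B;x)$ (not containing it). For a single pendant path $P = x_i^j - y_i^j$ attached at $x_i^j$: if $x_i^j \in S$ we get the factor $x$ (only $y_i^j$ excluded), if not we get $1 + x$ (for $y_i^j$ in or out); but from the perspective of $w_i$, what matters is whether $x_i^j$ is usable. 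So define for the two-edge pendant path, viewed from $w_i$: contribution when $w_i \in S$ is $(1+x)$ per path (the $x_i^j$ are forced out, $y_i^j$ free), and when $w_i \notin S$ it is $(1 + x + x^2)$ per path ($x_i^j$ out gives $1+x$ from $y_i^j$; $x_i^j$ in gives $x$ from $y_i^j$ forced out — wait, that is $1+x+x$; recompute carefully: $x_i^j$ out: $y_i^j$ in or out, weight $1+x$; $x_i^j$ in: weight $x$, $y_i^j$ forced out; total $1+2x$). So per pendant path the factor is $1+x$ if $w_i \in S$ and $1+2x$ if $w_i \notin S$. Hence for branch $B_i$:
\[
I^{\mathrm{in}}(B_i;x) = x(1+x)^t, \qquad I^{\mathrm{out}}(B_i;x) = (1+2x)^t.
\]

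**Assembling the whole tree.**

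Next I would assemble $T = T_{m,t,1}$. Independent sets of $T$ either contain $v$ or not. If $v \in S$, all $w_i$ are excluded, so each branch contributes $I^{\mathrm{out}}(B_i;x)/1$ but with $w_i$ already forbidden — actually when $v \in S$, within branch $B_i$ the vertex $w_i$ is forbidden, so we need the "branch polynomial with $w_i$ forced out", which is exactly $(1+2x)^t$. If $v \notin S$, each branch independently contributes $I(B_i;x) = I^{\mathrm{in}}(B_i;x) + I^{\mathrm{out}}(B_i;x) = x(1+x)^t + (1+2x)^t$. Therefore
\[
I(T;x) = x\bigl((1+2x)^t\bigr)^m + \bigl(x(1+x)^t + (1+2x)^t\bigr)^m.
\]
From here, $i_T(k)$ is the coefficient of $x^k$. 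The target indices are $k = mt+1, mt+2, mt+3$, which sit just below $\alpha(T) = (1+t)m$, i.e. at distance $m-1, m-2, m-3$ from the top. Since $(1+2x)^t$ has degree $t$ and $x(1+x)^t$ has degree $t+1$, the top of the product $(x(1+x)^t + (1+2x)^t)^m$ comes from choosing the $x(1+x)^t$ term in many of the $m$ factors; choosing it in $j$ factors and $(1+2x)^t$ in $m-j$ factors gives degree $j(t+1) + (m-j)t = mt + j$. So $i_T(mt+\ell)$ for small $\ell$ is governed by $j \in \{\ell, \ell+1, \dots\}$ factors of type $x(1+x)^t$, plus a contribution from the first term $x(1+2x)^{mt}$ only when $mt + \ell$ is attainable there, i.e. $\ell \le 1$ roughly (degree $mt+1$). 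So for $\ell \in \{1,2,3\}$ I expect the first term $x(1+2x)^{tm}$ to contribute only to $i_T(mt+1)$.

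**Extracting the coefficients.**

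The main computation is then extracting $[x^{mt+\ell}]\bigl(x(1+x)^t + (1+2x)^t\bigr)^m$ for $\ell = 1,2,3$. Expanding multinomially: $\sum_j \binom{m}{j} x^j (1+x)^{tj} (1+2x)^{t(m-j)}$, and within this the coefficient of $x^{mt+\ell}$ picks up, from the $j$-th term, the coefficient of $x^{mt+\ell-j}$ in $(1+x)^{tj}(1+2x)^{t(m-j)}$, a polynomial of degree $tm$; we need the coefficient at degree $tm - (j-\ell)$, i.e. close to the top, so write $(1+x)^{tj} = x^{tj}(1+1/x)^{tj}$ etc. and read off low-order terms in $1/x$. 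Concretely, $[x^{tm - r}](1+x)^{tj}(1+2x)^{t(m-j)} = [y^r](1+y)^{tj}(2+y)^{t(m-j)} = 2^{t(m-j)}[y^r](1+y)^{tj}(1+y/2)^{t(m-j)}$, which for small $r$ is $2^{t(m-j)}$ times a polynomial in $tj$ and $t(m-j)$ of degree $r$. With $r = j - \ell$ small (dominant terms $j = \ell, \ell+1, \ell+2$), this is entirely manageable: we get $i_T(mt+\ell)$ as a sum over a few values of $j$ of $\binom{m}{j}$ times $2^{t(m-j)}$ times an explicit low-degree polynomial in $t$. After dividing through by the common factor $2^{t(m-1)}$ (the $j = 1$ scale), the inequality $i_T(mt+2)^2 < i_T(mt+1)\, i_T(mt+3)$ becomes a polynomial inequality in $m$ and $t$; the leading behaviour will be driven by the largest powers of $2^{-t}$ that survive, and this is where the hypothesis $t \le m \le 2^{t/16}$ enters — it controls the relative sizes of terms like $\binom{m}{2} 2^{-2t}$ versus $\binom{m}{1} 2^{-t}$ and versus the lower-order polynomial-in-$t$ corrections.

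**Where the difficulty lies.**

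The main obstacle is not the setup but the careful asymptotic bookkeeping: we must identify \emph{exactly} which terms dominate each of $i_T(mt+1), i_T(mt+2), i_T(mt+3)$, show that the "would-be log-concave" leading term cancels (so that the sign of $i_T(mt+2)^2 - i_T(mt+1)i_T(mt+3)$ is decided by a lower-order term), and then bound the error terms — including the contribution of the stray first term $x(1+2x)^{tm}$ to $i_T(mt+1)$, and the $r=j-\ell$ expansions for $j$ slightly larger than $\ell$ — uniformly over the whole range $t \le m \le 2^{t/16}$. I would organize this by first writing each $i_T(mt+\ell)$ as (leading term)$\cdot(1 + \text{small})$ with an explicit bound on "small" using $m \le 2^{t/16}$, then substitute into the difference and check the sign. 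The choice of the exponent $1/16$ (and the "$+2$" in the broken index) presumably comes precisely from making all these error bounds close comfortably; getting that constant to work out is the delicate part, but it is a finite calculation once the structure above is in place.
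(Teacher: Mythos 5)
Your setup is sound and is in substance the same decomposition the paper uses: conditioning on whether the root $v$ is in the independent set gives exactly
$$
I(T;x) \;=\; x\,(1+2x)^{tm} \;+\; \bigl(x(1+x)^t+(1+2x)^t\bigr)^m,
$$
and your identification of the dominant coefficient $\binom{m}{\ell}2^{t(m-\ell)}$ at degree $mt+\ell$ in the second term matches the paper's counts $f_{m,t}(mt+\ell)\approx\binom{m}{\ell}2^{mt-\ell t}$. However, the proposal stops before the decisive step, and the roadmap you give for that step points in the wrong direction. You predict that the leading terms of $i_T(mt+2)^2$ and $i_T(mt+1)i_T(mt+3)$ will cancel and that the sign will be decided by lower-order corrections requiring delicate bookkeeping; and you treat the contribution of $x(1+2x)^{tm}$ to $i_T(mt+1)$ as a ``stray'' error term to be bounded. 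In fact that term is the entire engine of the proof: it contributes exactly $2^{mt}$ to $i_T(mt+1)$, which swamps the $\Theta\bigl(m\,2^{t(m-1)}\bigr)$ contribution from the second factor by a factor of $2^t/m$. With $i_T(mt+1)=(1+o(1))2^{mt}$, $i_T(mt+2)=(1+o(1))\binom{m}{2}2^{mt-2t}$, and $i_T(mt+3)\ge\binom{m}{3}2^{mt-3t}$, the ratio
$$
\frac{i_T(mt+2)^2}{i_T(mt+1)\,i_T(mt+3)} \;\lesssim\; \frac{\binom{m}{2}^2}{\binom{m}{3}\,2^{t}} \;=\;\Theta\!\left(\frac{m}{2^{t}}\right)
$$
tends to $0$, so the inequality holds with exponential slack and no cancellation occurs. (Restricted to sets avoiding the root, the sequence is actually log-concave at these indices, so without noticing the $2^{mt}$ boost your planned comparison would come out with the wrong sign.)

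The one place where genuine bookkeeping is needed — and where the hypothesis $m\le 2^{t/16}$ is actually used — is in certifying that $i_T(mt+2)$ is \emph{not larger} than $(1+o(1))\binom{m}{2}2^{mt-2t}$, i.e.\ that the terms $j\ge 3$ in your multinomial expansion (independent sets using $j\ge 3$ of the $w_i$) are negligible. Your proposal gestures at this but does not carry it out; the paper does it by bounding each such term by $2^{5s\log m - st/3}$ times the main term. So: same approach, correct skeleton, but the proof is incomplete and the stated plan for finishing it misidentifies which term drives the failure of log-concavity.
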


Theorem \ref{thm-Theta(root(alpha))} follows immediately on taking $m=[2^{t/16}]$. 

As a simple special case of Theorem \ref{thm-more-general} we see that the tree $T_{t,t,1}$ on $1+t+t^2$ vertices has independence number $t+t^2$ and has log-concavity of the independent set sequence broken at $t^2+2$, so for large $t$ we have that log-concavity is broken at $\alpha(T_{t,t,1})-(1+o(1)\sqrt{\alpha(T_{t,t,1})})$. Computation suggests that in fact log-concavity of the independent set sequence of $T_{t,t,1}$ is broken at $t^2+2$ for all $t \geq 4$.   

\begin{proof} (Proof of Theorem \ref{thm-more-general})
Denote by $f_{m,t}(k)$ the number of independent sets in $T_{m,t,1}$ that do not include the root $v$, and by $h_{m,t}(k)$ the number that do include $v$. Observe that on deleting $v$ and all its neighbors $T_{m,t,1}$ reduces to a perfect matching on $mt$ edges, a graph that admits independent sets of size only up to $mt$. It follows that $h_{m,t}(k)=0$ for $k \geq mt+2$, and that 
\begin{equation} \label{split}
i_{T_{m,t,1}}(k) = \left\{
\begin{array}{rl}
h_{m,t}(k) + f_{m,t}(k) & \mbox{if $k \leq mt+1$} \\
f_{m,t}(k) & \mbox{if $k \geq mt+2$}.
\end{array}
\right.
\end{equation}
Further, we have $h_{m,t}(mt+1)=2^{mt}$. So our goal is to show that
$$
\left(f_{m,t}(mt+2)\right)^2 < (2^{mt} + f_{m,t}(mt+1))f_{m,t}(mt+3),
$$
which is implied by 
\begin{equation} \label{mt}
\left(f_{m,t}(mt+2)\right)^2 < 2^{mt}f_{m,t}(mt+3).
\end{equation}
We will establish \eqref{mt} by showing that for $t \leq m \leq 2^{t/16}$ we have
\begin{equation} \label{inq-f2,f3}
f_{m,t}(mt+3) \geq \binom{m}{3}2^{mt-3t}~~~\mbox{and}~~~f_{m,t}(mt+2) = \left(1+o(1)\right) \binom{m}{2}2^{mt-2t}.
\end{equation}
That \eqref{inq-f2,f3} implies \eqref{mt} for large $t$ and $m$  satisfying $m \leq 2^{t-1}$ (not just $m \leq 2^{t/16}$) is easily established.  

\smallskip

We get the lower bound on $f_{m,t}(t^2+3)$ in \eqref{inq-f2,f3} by considering those independent sets of size $mt+3$ that include exactly $3$ vertices from $\{w_1, \ldots, w_m\}$ (and so necessarily don't contain the root). There are $\binom{m}{3}$ ways to select the three vertices. Once they are chosen the remaining $mt$ vertices come from a graph that consists of $3t$ isolated vertices and $mt-3t$ isolated edges, for which there are $2^{mt-3t}$ options.

For the asymptotic determination of $f_{m,t}(mt+2)$ in \eqref{inq-f2,f3}: 
\begin{itemize}
\item Arguing exactly as we did for $f_{m,t}(mt+3)$, there are $\binom{m}{2}2^{mt-2t}$ independent sets in $T_{m,t,1}$ of size $mt+2$ that include exactly $2$ vertices from $\{w_1, \ldots, w_m\}$ (and so necessarily don't contain the root). 
\item Next, there is no contribution to $f_{m,t}(mt+2)$ from independent sets that include exactly $0$ or $1$ vertices from $\{w_1, \ldots, w_m\}$ (the maximum size of such independent sets is $mt+1$).
\item The remaining independent sets that contribute to $f_{m,t}(t^2+2)$ have $s$ vertices from $\{w_1, \ldots, w_m\}$, where $s$ satisfies $3 \leq s \leq m$ ($\binom{m}{s}$ options). The remaining $mt-(s-2)$ vertices come from a graph that consists of $st$ isolated vertices and $mt-st$ independent edges. To construct all such independent sets, we can first choose $\ell$ of the isolated edges from which not to select a vertex, then choose a vertex from each of the remaining edges, and finally choose $(s-2)-\ell$ isolated vertices to not select to complete the independent set. Here $\ell$ satisfies $0 \leq \ell \leq \min\{s-2,mt-st\}$.   
\end{itemize}
It follows from the discussion above that for each $3 \leq s \leq m$ and $0 \leq \ell \leq \min\{s-2,mt-st\}$ we have a term
$$
\binom{m}{s}\binom{mt-st}{\ell}\binom{st}{(s-2)-\ell}2^{mt-st-\ell}
$$
contributing to $f_{m,t}(mt+2)$, and that these are the only terms contributing to $f_{m,t}(mt+2)$ aside from $\binom{m}{2}2^{mt-2t}$ (which we hope to show is the dominant term). Since there at most $m^2$ such terms, we complete the verification of the asymptotic expression for $f_{m,t}(mt+2)$ in \eqref{inq-f2,f3} by showing that
$$
\frac{\binom{m}{s}\binom{mt-st}{\ell}\binom{st}{(s-2)-\ell}2^{mt-st-\ell}}{\binom{m}{2}2^{mt-2t}} = o\left(m^{-2})\right)
$$
or equiavlently
\begin{equation} \label{asy}
\frac{\binom{m}{s}\binom{mt-st}{\ell}\binom{st}{(s-2)-\ell}}{2^{(s-2)t+\ell}} = o(1).
\end{equation}
Using (here and throughout) the simple bound $\binom{n}{k} \leq n^k$ we have
\begin{eqnarray*}
\binom{m}{s} & \leq & m^s = 2^{s \log m}, \\
\binom{mt-st}{\ell} & \leq & (mt)^s \leq 2^{2s\log m}
\end{eqnarray*}
(the latter using $\ell \leq s$ and $t \leq m$) and (for $t \geq 2$, and using $s,t \leq m$)
$$
\binom{st}{(s-2)-\ell} \leq (st)^s \leq 2^{2s\log m}.
$$
Lower bounding $(s-2)t+\ell \geq st/3$ (recall $s \geq 3$) we get that the left-hand side of \eqref{asy} is upper bounded by 
$$
2^{5s\log m-(st)/3}
$$ 
As long as $m \leq 2^{t/16}$ this is $o(1)$.
 
This concludes the verification that $f_{m,t}(mt+2)=(1+o(1))\binom{m}{2}2^{mt-2t}$, and so the verification that the log-concavity of $T_{m,t,1}$ is broken at $mt+2$ for sufficiently large $t$ as long as $m \leq 2^{t/16}$.
\end{proof}

\section{Discussion} \label{sec-discussion}

There is a heuristic that explains why we might expect the log-concavity of the independent set sequence of $T_{m,t,1}$ to be broken at $mt+2$, but not at any subsequent point. Recall from \eqref{split} that for $k \geq mt+2$ we have $i_k(T_{m,t,1})=h_{m,t}(k)$. But $h_{m,t}(k)$ is the number of independent sets of size $k$ in the graph $T_{m,t,1}-v$, which consists of $m$ disjoint copies of a tree $S_{t,2}$, which in turn consists of $t$ paths of length $2$ all sharing a common start vertex. We observe (see Lemma \ref{lemma-mod-lc} below) that the independent set sequence of $S_{t,2}$ is log-concave for all $t$, and so (by the fact that the convolution of log-concave sequences is log-concave, see e.g. \cite{B2016}), the independent set sequence of $T_{m,t,1}-v$ is log-concave. So certainly log-concavity for the independent set sequence of $T_{m,t,1}$ is not broken at $mt+3$ or later. However, there is a chance that it might be broken at $mt+2$: we have $f_{m,t}(mt+2)^2 \geq f_{m,t}(mt+1)f_{m,t}(mt+3)$, but it may well be the case (and indeed is, as we have shown) that the boost given to the right-hand side by the addition of $h_{m,t}(mt+1)$ to $f_{m,t}(mt+1)$ results in the reversed inequality $f_{m,t}(mt+2)^2 < \left(f_{m,t}(mt+1)+h_{m,t}(mt+1)\right)f_{m,t}(mt+3)$.

\medskip

We do not know how far further beyond $\Theta\left(\alpha(T)/\log \alpha(T)\right)$ we can push the point where log-concavity can be broken.

\begin{question} \label{q-dist}
Is there a $c>0$ such that there are trees $T$ with arbitrarily large independence number $\alpha(T)$, for which the log-concavity of the independent set sequence of $T$ is broken at no later than around $(1-c)\alpha(T)$?
\end{question}
It was shown by Levit and Mandrescu \cite{LM2006} that the independent set sequence of a tree $T$ is monotone decreasing from $\lceil (2\alpha(T)-1)/3\rceil$ on. So in order to use points at which log-concavity is broken as an avenue to producing a tree with non-unimodal independent set sequence (and so answer Question \ref{question-AMSE} in the negative), it would be necessary to work with examples that answer Question \ref{q-dist} in the affirmative with $c > 1/3$.   

All of our computations show that $T_{m,t,1}$ either has log-concave independent set sequence or that log-concavity is broken at one point only, namely at $mt+2$. Furthermore, all of the counterexamples to log-concavity of the independent set sequence of a tree found in \cite{KL2023, KLYM2023, RS2025} have log-concavity broken at one point only. After reading an earlier version of this paper \cite{G2025}, Ferenc Bencs observed computationally that deeper spherically symmetric trees can exhibit non-log-concavity of the independent set sequence at more than one place \cite{B}. Indeed, denote by $T(2^m1^n)$ the spherically symmetric tree in which all vertices at distances $0$ through $m-1$ from the root have two children, and all vertices at distance $m$ through $m+n-1$ have one child. A {\tt Mathematica} calculation shows that, for example
\begin{itemize}
\item log-concavity of the independent set sequence of $T(2^41^9)$ is broken at two places;
\item for $T(2^51^{15})$ it is broken at three places;
\item for $T(2^61^{17})$ it is broken at eight places;
\item for $T(2^71^{23})$ it is broken at sixteen places; and
\item for $T(2^81^{27})$ it is broken at twenty-four places.
\end{itemize}

\begin{question}
For each $k \geq 1$ is there an $m=m(k)$ and $n=n(k)$ such that the log-concavity of the independent set sequence of $T(2^m1^n)$ is broken at more than $k$ places?
\end{question}

In \cite{BR2025} Bautista-Ramos used a different family of trees to verify that indeed for every $k \geq 1$ there is a tree $T_k$ such that the log-concavity of the independent set sequence of $T_k$ is broken at more than $k$ places.   

\medskip

We end with the promised verification of the log-concavity of the independent set sequence of $S_{t,2}$.

\begin{lemma} \label{lemma-mod-lc}
For all $t$, the independent set sequence of $S_{t,2}$ is log-concave.
\end{lemma}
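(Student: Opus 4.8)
The plan is to write down the independent set polynomial of $S_{t,2}$ in closed form and then verify the log-concavity relations by a direct elementary estimate. Applying the standard deletion identity $I(G;x)=I(G-v;x)+x\,I(G-N[v];x)$ to the unique degree-$t$ vertex $c$ of $S_{t,2}$, and observing that $S_{t,2}-c$ is a disjoint union of $t$ edges while $S_{t,2}-N[c]$ is a set of $t$ isolated vertices, gives
$$
\sum_{k\ge 0} i_k(S_{t,2})\,x^k \;=\; (1+2x)^t + x(1+x)^t ,
$$
so that $a_k := i_k(S_{t,2}) = 2^k\binom{t}{k} + \binom{t}{k-1}$ for $0\le k\le t+1$. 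Since $\binom{t}{k-1}\ge 1$ for $1\le k\le t$ and $a_0=a_{t+1}=1$, every term is positive, so there are no internal zeros and it suffices to prove the strict inequality $a_k^2 > a_{k-1}a_{k+1}$ for each $1\le k\le t$.

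Next I would clear the binomial coefficients by factoring $\binom{t}{k}$ out of $a_{k-1}$, $a_k$ and $a_{k+1}$, using the ratios $\binom{t}{k-1}/\binom{t}{k}=k/p$, $\binom{t}{k-2}/\binom{t}{k-1}=(k-1)/(p+1)$ and $\binom{t}{k+1}/\binom{t}{k}=(p-1)/(k+1)$, where $p:=t-k+1\ge 1$; all three remain valid at the extreme value $p=1$ (where $\binom{t}{k+1}=0$). Dividing $a_k^2>a_{k-1}a_{k+1}$ by $\binom{t}{k}^2$ then turns it into the equivalent assertion
$$
\left(\frac{k}{p}+2^{k}\right)^{2} \;>\; \frac{k}{p}\left(\frac{k-1}{p+1}+2^{k-1}\right)\left(1+\frac{2^{k+1}(p-1)}{k+1}\right).
$$

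Expanding both sides and grouping the resulting terms according to their power of two, the difference (left side minus right side) splits as $A+B+C$, where $A$ collects the purely binomial terms, $B$ the terms of order $2^{k}$, and $C$ the terms of order $4^{k}$. A short computation gives
$$
A=\frac{k(k+p)}{p^{2}(p+1)}=\frac{k(t+1)}{p^{2}(p+1)}>0,\qquad C=4^{k}\left(1-\frac{k(p-1)}{p(k+1)}\right)=\frac{4^{k}(t+1)}{p(k+1)},
$$
while $B=\frac{2^{k-1}k}{p}\left(3-\frac{4(p-1)(k-1)}{(p+1)(k+1)}\right)$, which is strictly greater than $-\frac{2^{k-1}k}{p}$ because $(p-1)(k-1)<(p+1)(k+1)$.

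The one substantive point is then the comparison $C\ge \frac{2^{k-1}k}{p}$, which is equivalent to $2^{k+1}(t+1)\ge k(k+1)$ and follows at once from $t\ge k$ (so $t+1\ge k+1$) together with $2^{k+1}\ge k$. Granting this, $A+B+C > A+\left(C-\frac{2^{k-1}k}{p}\right)\ge A>0$, which is exactly the required strict log-concavity. I do not anticipate a real obstacle here: the only points demanding a little care are the $p=1$ boundary of the binomial-ratio identities and the bookkeeping in the expansion, and since the exponential term $C$ beats the single negative contribution with a large margin, the whole estimate has plenty of slack.
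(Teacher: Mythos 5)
Your proof is correct and follows essentially the same route as the paper's: both establish $i_k(S_{t,2}) = 2^k\binom{t}{k}+\binom{t}{k-1}$, expand $a_k^2 - a_{k-1}a_{k+1}$, and show that the positive term of order $4^k$ dominates the single negative cross term via an elementary inequality using $2^{k+1}\geq k$ and $t\geq k$. The only cosmetic difference is that you normalize by $\binom{t}{k}^2$ and track the ratios in $p=t-k+1$, whereas the paper keeps everything in binomial coefficients and invokes the identity $\binom{t}{k}^2-\binom{t}{k-1}\binom{t}{k+1}=\frac{1}{k+1}\binom{t}{k}\binom{t+1}{k}$.
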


\begin{proof}
The independence number of $S_{t,2}$ is $t+1$. For $0 \leq k \leq t+1$ the number of independent sets of size $k$ in $S_{t,2}$ is
$$
\binom{t}{k-1} + 2^k\binom{t}{k},
$$
the first term coming from independent sets that include the root of $S_{t,2}$ (the vertex in common to all the paths of length $2$) --- in this case the remaining $k-1$ vertices must be selected from $t$ isolated vertices --- and the second term coming from independent sets that don't include the root --- in this case all $k$ vertices must be selected from $t$ independent edges. So our task is to establish that for $1 \leq k \leq t$,
\begin{equation} \label{lc-star2}
\left(\binom{t}{k-1} + 2^k\binom{t}{k}\right)^2  \leq \left(\binom{t}{k-2} + 2^{k-1}\binom{t}{k-1}\right)\left(\binom{t}{k} + 2^{k+1}\binom{t}{k+1}\right).
\end{equation}
Expanding out and using
$$
\binom{t}{k}^2-\binom{t}{k-1}\binom{t}{k+1} = \frac{1}{k+1}\binom{t}{k}\binom{t+1}{k}
$$
and the equivalent identity with $k$ everywhere replaced with $k-1$, after some rearranging \eqref{lc-star2} is seen to be implied by 
$$
\frac{2^k}{k+1}\binom{t}{k}\binom{t+1}{k} \geq 2\binom{t}{k-2}\binom{t}{k+1},
$$
which in turn is equivalent to
\begin{equation} \label{lc-star2b}
\frac{2^k(t+1)}{k} \geq \frac{2(k-1)(t-k)}{t-k+2}.
\end{equation}
We have $2^k/k \geq 2$ for $k \geq 1$, so \eqref{lc-star2b} is implied by $t+1 \geq (k-1)(t-k)/(t-k+2)$. 
This is evident for $k=t$, and for $k < t$ it is implied by $t+1 \geq k-1$, 
which is true for $k \leq t$.
\end{proof}

\end{document}